\documentclass[12pt]{elsarticle}
\usepackage{amsmath, amsthm,
amssymb,mathdots,extsizes}
\usepackage[all]{xy}
 \sloppy

\usepackage[active]{srcltx}

\DeclareMathOperator{\ind}{ind}
\DeclareMathOperator{\End}{End}
\DeclareMathOperator{\rad}{Rad}
\DeclareMathOperator{\diag}{diag}

\renewcommand{\ge}{\geqslant}

\newcommand{\cc}{\mathbb C}
\newcommand{\rr}{\mathbb R}
\newcommand{\ff}{\mathbb F}
\newcommand{\pp}{\mathbb P}

\newcommand{\mc}{\mathcal}

\newcommand{\arr}[1]{\left[\begin{array}
 #1  \end{array}\right]}

\newcommand{\matt}[1]{\left[\begin{smallmatrix}
 #1 \end{smallmatrix}\right]}
\newcommand{\mat}[1]{\begin{bmatrix}
 #1  \end{bmatrix}}
\newcommand{\ma}[1]{\begin{matrix}
 #1  \end{matrix}}

\newtheorem{theorem}{Theorem}
\newtheorem{lemma}{Lemma}
\theoremstyle{remark}
\newtheorem{remark}{Remark}

\begin{document}

\title{Symplectic spaces and pairs of symmetric and nonsingular skew-symmetric matrices under congruence\thanks{aa}}

\author[bov]{Victor A.~Bovdi} \ead{vbovdi@gmail.com}
\address[bov]{United Arab Emirates University, Al Ain, UAE}

\author[hor]{Roger A.~Horn} \ead{rhorn@math.utah.edu}
\address[hor]{Department of
Mathematics, University of Utah, Salt
Lake City, Utah 84103}

\author[bov]{Mohamed A.~Salim}
\ead{msalim@uaeu.ac.ae}

\author[ser]{Vladimir~V.~Sergeichuk\corref{cor}}
\ead{sergeich@imath.kiev.ua}
\address[ser]{Institute of Mathematics, Tereshchenkivska 3,
Kiev, Ukraine}

\cortext[cor]{Corresponding author. Published in: Linear Algebra Appl. 537 (2018) 84--99.}

\begin{abstract} Let $\ff$ be a field of characteristic not $2$, and let $(A,B)$ be a pair of $n\times n$ matrices over $\ff$, in which $A$ is  symmetric and $B$ is skew-symmetric. A canonical form of
$(A,B)$ with respect to congruence transformations $(S^TAS,S^TBS)$ was given by Sergeichuk (1988) up to classification of symmetric and Hermitian forms over finite extensions of $\ff$. We obtain a simpler canonical form of $(A,B)$ if $B$ is nonsingular. Such a pair $(A,B)$  defines a quadratic form on a symplectic space, that is, on a vector space with scalar product given by a nonsingular skew-symmetric form. As an application, we obtain known canonical matrices of quadratic forms and Hamiltonian operators on
real and complex symplectic spaces.
\end{abstract}

\begin{keyword}
Pairs of symmetric and skew-symmetric matrices; symplectic spaces; symplectic congruence; Hamiltonian operators

\MSC 15A21; 15A22; 15A63; 51A50
\end{keyword}

\maketitle

\section{Introduction}

In this paper, $\ff$ denotes a field of characteristic not $2$. We consider the canonical form problem for pairs of $n\times n$ matrices
\begin{equation}\label{111}
 (A,B)\text{ with symmetric $A$ and  nonsingular skew-symmetric $B$}
\end{equation}
over $\ff$
with respect to congruence transformations
$$
 (A,B)\mapsto (S^TAS,S^TBS),\qquad\text{$S\in\ff^{n\times n}$ is nonsingular.}$$

This is the problem of classifying quadratic forms $(u,u)\mapsto u^TAu$ on the space $\ff^n$ with scalar product $(u,v)\mapsto u^TBv$. A vector space with scalar product given by a nonsingular skew-symmetric form is called \emph{symplectic}. Thus, we consider the problem of classifying quadratic forms on symplectic spaces.

Canonical forms under congruence for matrix pairs \eqref{111} over $\rr$ and $\cc$ without the condition of nonsingularity of $B$ are given in \cite{goh,lan1,thom}.
A more general problem of classifying pairs of symmetric or skew-symmetric matrices with respect to congruence (or pairs of Hermitian matrices with respect to *congruence)
was solved by Sergeichuk \cite[Theorem 4]{ser_izv} over $\ff$ up to classification of symmetric and Hermitian forms over finite extensions of $\ff$ (without ``up to \dots'' if $\ff$ is $\rr$ or $\cc$). Closely related results for pairs of Hermitian matrices were obtained by Williamson \cite{will1,will2}.

Theorem 4 in \cite{ser_izv} is proved by a universal method that is valid for arbitrary systems of forms and linear mappings over a field or skew field of characteristic not 2; it reduces the problem of classifying such a system to the problem of classifying quadratic and Hermitian forms and some system of linear mappings (i.e., representations of some quiver). This method  is developed in \cite{roi,ser_izv} and is applied in  \cite{ser_izv} (see also \cite{hor-ser_bilin,hor-ser_mixed,mel,ser_brazil,ser_isom}) to the problem of classifying bilinear or sesquilinear forms, pairs of symmetric or skew-symmetric or Hermitian forms,  as well as to the problem of classifying isometric or selfadjoint  operators on a space with nonsingular symmetric or Hermitian form. We now apply this method to the problem of classifying quadratic forms and Hamiltonian operators on symplectic spaces.

For pairs $(A,B)$ with symmetric $A$ and skew-symmetric $B$, the canonical form in \cite[Theorem 4]{ser_izv} simplifies significantly if $A$ is nonsingular, but, unfortunately, not if $B$ is nonsingular.

In Section \ref{ss2} we modify the proof of Theorem 4 in \cite{ser_izv} in order to obtain a simple canonical  form of \eqref{111}.

John Williamson in \cite{will}  classified
matrix pairs \eqref{111} over $\rr$ up to congruence; he obtained some results over any field. As an application, he constructed for  pairs of $4\times 4$ real matrices \eqref{111}  their canonical pairs in the form $(A_{\text{can}},\Omega_n)$ with $n=2$ and
 \begin{equation*}\label{fel}
\Omega_n:=\mat{0&I_n\\-I_n&0}.
\end{equation*}
Thus, $A_{\text{can}}$ is a canonical form of a $4\times 4$ symmetric real matrix $A$ with respect to transformations
\[
A\mapsto S^TAS\qquad\text{such that $S^T\Omega_nS=\Omega_n$ and $S$ is nonsingular.}
\]
These transformations are called \emph{symplectic congruence transformations}; a matrix $S$ such that $S^T\Omega_nS=\Omega_n$ is called \emph{symplectic}. Thus, Williamson constructed canonical matrices of quadratic forms on a four-dimensional real symplectic space in a \emph{symplectic basis} (i.e., in a basis in which the scalar product is given by the matrix $\Omega_n$).

The following consequence of Williamson's classification of matrix pairs \eqref{111} over $\rr$ is known as
\emph{Williamson's theorem } \cite{arn,bha,gos}:
\begin{equation}\label{ere}
\parbox[c]{0.8\textwidth}{\emph{each $2n\times 2n$ positive definite symmetric real matrix $A$ is symplectically
congruent to a unique diagonal matrix
$ D\oplus D$, in which $D=\diag(\alpha_1,\dots,\alpha_n)$ and $\alpha_1\ge\dots\ge \alpha_n>0$.}
}
\end{equation}
Hofer and Zehnder \cite{hof} point out that this fact was known to Weierstrass \cite{wei}. The positive numbers $\alpha_1,\dots,\alpha_n$ are the \emph{symplectic eigenvalues} of $A$;  their properties are studied by Bhatia and Jain \cite{bha}.

Using Williamson's  classification \cite{will},  Galin \cite{gal} constructs canonical quadratic forms (in the form of homogeneous polynomials of degree two in $2n$ variables) on the real symplectic space $\rr^{2n}$ with scalar product given by $\Omega_n$. Galin's list was presented by Arnold  in \cite[Appendix 6]{arn} and by Arnold and  Givental in \cite[Chapter 1, \S\,2.4]{arn1}.

A linear operator $\mc H$ on a symplectic space $V$ with scalar product $\omega (u,v)$ is  \emph{Hamiltonian} if $\omega (\mc H u,v) =-\omega (u,\mc H v)$ for all $u,v\in V$.  It defines the form $\mc A(u,v):=  \omega (\mc H u,v)$ on $V$, which is symmetric since
 \begin{equation*}\label{eef}
\mc A(u,v)=  \omega (\mc H u,v)=-\omega (u,\mc H v)= \omega (\mc H v,u)=\mc A(v,u).
\end{equation*}
Therefore, the problem of classifying quadratic forms on symplectic spaces is equivalent to the problem of classifying Hamiltonian operators. If $H$ is the matrix of $\mc H$ in a symplectic basis, then $\mc A(u,v)=-\omega (u, \mc H  v)=-[u]^T\Omega_nH[v]$, hence $-\Omega_nH$ is the matrix of the symmetric form $\mc A(u,v)$.
Since $\Omega_n(-\Omega_nH)=H$,
\begin{equation}\label{xxx}
\parbox[c]{0.8\textwidth}{left multiplication by $\Omega_n$ of a set of canonical matrices of symmetric forms on symplectic spaces produces  a set of canonical matrices of Hamiltonian operators.}
\end{equation}

Thus, canonical matrices of real Hamiltonian operators can be obtained by left multiplication by $\Omega_n$ of Galin's canonical matrices of quadratic forms. Burgoyne and Cushman \cite{burg,burg1} construct
another set of canonical matrices of real Hamiltonian operators using Williamson's article \cite{will}. Canonical matrices of real Hamiltonian operators whose spectrum is an imaginary pair are constructed by Coleman \cite{col}.
Miniversal deformations of Burgoyne and Cushman's canonical matrices are obtained by Ko\c{c}ak \cite{koc}; see also \cite{gal} and \cite[Chapter 1, \S\,3]{arn1}.

A Hamiltonian operator is given in a symplectic basis by a \emph{Hamiltonian matrix}; that is, by a $2n\times 2n$ matrix $H$ such that $\Omega_nH$ is symmetric. The problem of classifying Hamiltonian operators is the problem of classifying Hamiltonian matrices with respect to \emph{symplectic similarity transformations}
\[
H\mapsto S^{-1}HS,\qquad S\text{ is symplectic.}
\]
Arnold and  Givental \cite[Chapter 1, \S\,2.2]{arn1}  deduce from  \cite{will} that
\emph{complex Hamiltonian matrices are  symplectically similar if and  only  if they  are  similar } (an analogous statement is proved in \cite[Problem P.12.4]{gar} and \cite[Problem 2.5.P16]{hor-joh}: complex Hermitian matrices are unitarily similar if and only if they are similar).
Pairs of Hamiltonian or symplectic matrices are studied in \cite{fer,lin,mehl}.

In Section \ref{ss2} we obtain
canonical forms of \eqref{111} up to classification of symmetric and Hermitian forms. In Section \ref{sen} we give them without ``up to'' over an algebraically or real closed field of characteristic not 2 (in particular, over $\cc$ and $\rr$).  We give them  in the form $(A_{\text{can}},\Omega_n)$.
Thus, $A_{\text{can}}$ is a canonical form of a symmetric matrix $A$ with respect to symplectic congruence. After left multiplication of all $A_{\text{can}}$ by $\Omega_n$ (see \eqref{xxx}), we obtain canonical matrices of Hamiltonian operators. They practically coincide with Burgoyne and Cushman's canonical matrices \cite{burg,burg1}, but we obtain them using Theorem \ref{thm1} instead of  Williamson's classification of quadratic forms on symplectic spaces \cite{will}.

\section{Matrix pairs over a field of characteristic not 2}\label{ss2}

A \emph{real closed field}  is a field $\pp$ that has index 2 in its algebraic closure $\overline\pp$. Its properties are close to the properties of $\rr$ (see \cite[Chapter VI, \S\,2]{gri} or \cite[Lemma 2.1]{hor-ser_bilin}):  the characteristic of $\pp$ is $0$ and each algebraically closed field of characteristic $0$ contains a real closed subfield; there is $i\in\overline\pp$ such that $i^2=-1$ and $\overline\pp=\pp(i)$; $\pp$ has a unique linear ordering such that
$a > 0$ and $b > 0$ imply $a + b > 0$ and $ab> 0$.

A \emph{Frobenius block} over a field is a matrix
\begin{equation*}\label{3.lfo}
\Phi=\mat{
0&&0&-c_n\\[-5pt]
1&\ddots&&\vdots\\[-8pt]&\ddots&0&-c_2
\\0&&1& -c_1
}
\end{equation*}
whose characteristic
polynomial
$\chi_{\Phi}(x)$ is a
power of an irreducible
polynomial
$p_{\Phi}(x)$:
\begin{equation*}\label{ser24}
\chi_{\Phi}(x)=p_{\Phi}(x)^s
=x^n+
c_1x^{n-1}+\dots+c_n.
\end{equation*}
Each square matrix over any field is similar to a direct sum of Frobenius blocks and this sum is uniquely determined up to permutations of summands.

We denote by $\mc F_n(\ff)$ any set of $n\times n$ matrices over $\ff$ obtained by replacement of each $n\times n$ Frobenius block by a similar matrix.
If $\ff$ is an algebraically closed field, then $\mc F_n(\ff)$  can be the set of  Jordan blocks
\begin{equation*}\label{def}
J_n(a):=\mat{a&1&&0\\[-8pt]&a&\ddots\\[-8pt]&&\ddots&1
\\0&&&a}\quad(\text{$n$-by-$n$,  } a\in\ff).
\end{equation*}
 If $\pp$ is a real closed field and $\overline\pp=\pp(i)$ with $i^2=-1$, then $\mc F_n(\pp)$ can be the set of all $J_n(a)$ with $a\in\pp$,  and of the realifications
\begin{equation}\label{tys}
J_n(a+bi)^{\pp}:=\mat{a&b&1&0&&&&0
\\-b&a&0&1
\\[-8pt]&&a&b&\ddots&
\\[-8pt]&&-b&a&&\ddots
\\[-8pt]&&&&\ddots&&1&0
\\[-8pt]&&&&&\ddots&0&1
\\&&&&&&a&b
\\0&&&&&&-b&a}
\quad
       (2n\text{-by-}2n)
\end{equation}
of Jordan blocks $J_n(a+bi)$ over $\overline\pp$,
in which $a,b\in\pp$ and $b>0$.

For each
$\Psi\in \mc F_n(\ff)$, if there
exists a nonsingular skew-symmetric matrix
$M$ such that $ M\Psi$ is symmetric, then we fix one and denote it by $\widetilde\Psi$. Thus,
\begin{equation}\label{nhg}
\widetilde\Psi\Psi=(\widetilde\Psi\Psi)^T,\qquad  \widetilde\Psi=-\widetilde\Psi^T.
\end{equation}
Denote by $\widetilde{\mc F}_{n}(\ff)$ the set of all $\Psi\in \mc F_n(\ff)$ for which $\widetilde \Psi$ exists.

\begin{theorem}\label{thm1}
Let $A$ be  a symmetric matrix  and let $B$ be a nonsingular skew-symmetric matrix of the same size over a field\/ $\ff$ of characteristic not $2$.
\begin{itemize}
  \item[\rm(a)] The pair
$(A,B)$ is congruent over\/ $\ff$ to a direct sum of pairs of the following two types:
\begin{itemize}
  \item[\rm(i)] $\mc P_{\Phi}:=\left(
  \mat{0&\Phi\\\Phi^T&0},\mat{0&I_n\\-I_n&0} \right)$, in which  $\Phi\in{\mc F}_{n}(\ff)\smallsetminus\widetilde{\mc F}_{n}(\ff)$. These summands are uniquely determined, up to replacement of some summands $\mc P_{\Phi}$ with $\mc P_{\Phi'}$, in which $\Phi'\in{\mc F}_{n}(\ff)$ is such that
$\chi_{\Phi'}(x)
=(-1)^n\chi_{\Phi}(- x)$.

  \item[\rm(ii)]  $\mc Q_{\Psi}^{f(x)}:=(\widetilde \Psi\Psi,\widetilde \Psi)f(\Psi)=(\widetilde \Psi\Psi f(\Psi),\widetilde \Psi f(\Psi))$, in which $\Psi\in\widetilde{\mc F}_{n}(\ff)$, $0\ne f(x)\in\ff[x^2]$, and $\deg f(x)<\deg p_{\Psi}(x)$. These summands are uniquely determined, up to replacement of the
whole group of summands
\[
\mc Q_{\Psi}^{f_1(x)}
\oplus\dots\oplus
\mc Q_{\Psi}^{f_s(x)}
\]
with the same $\Psi$ by
\[
\mc Q_{\Psi}^{g_1(x)}
\oplus\dots\oplus
 \mc Q_{\Psi}^{g_s(x)},
\]
in which
the Hermitian form
\[
g_1(\omega)x_1^{\circ}x_1+\dots+
g_s(\omega)x_s^{\circ}x_s
\]
is equivalent to the Hermitian form
\begin{equation}\label{777aki}
f_1(\omega)x_1^{\circ}x_1+\dots+
f_s(\omega)x_s^{\circ}x_s
\end{equation}
over the
field\/ ${ \ff}(\omega):={ \ff}[x]/p_{\Psi}(x){ \ff}[x]$
with involution
$f(\omega)\mapsto f(\omega)^{\circ}:=
f(-\omega)$, in which $\omega:=x+p_{\Psi}(x){ \ff}[x]$.
\end{itemize}

  \item[\rm(b)] If\/ $\ff$ is an algebraically closed field of characteristic not $2$, then {\rm(ii)} can be replaced by
\begin{itemize}
  \item[\rm(ii$'$)] $(\widetilde \Psi\Psi,\widetilde \Psi)$, in which $ \Psi\in\widetilde{\mc F}_{n}(\ff)$.
\end{itemize}
If\/ $\ff$ is a real closed field, then {\rm(ii)} can be replaced by
\begin{itemize}
  \item[\rm(ii$''$)] $(\widetilde \Psi\Psi,\widetilde \Psi)$, $(-\widetilde \Psi\Psi,-\widetilde \Psi)$, in which $ \Psi\in\widetilde{\mc F}_{n}(\ff)$.
\end{itemize}
The summands {\rm(ii$'$)} and {\rm(ii$''$)} are determined uniquely, up to permutations.
\end{itemize}
\end{theorem}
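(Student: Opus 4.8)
The plan is to follow the "universal method" of \cite{roi,ser_izv}, specializing it to the pair $(A,B)$ with $A$ symmetric and $B$ nonsingular skew-symmetric, and to exploit the nonsingularity of $B$ at the very first step to eliminate most of the combinatorial overhead present in the general theorem. First I would pass from the matrix pair to the intrinsic object: the space $V=\ff^n$ carries the nonsingular skew-symmetric form $\omega(u,v)=u^TBv$, and the symmetric form $\alpha(u,v)=u^TAv$. Because $\omega$ is nonsingular, there is a unique operator $\mc H\colon V\to V$ with $\alpha(u,v)=\omega(\mc H u,v)$; symmetry of $\alpha$ forces $\mc H$ to be $\omega$-Hamiltonian (as in the computation preceding \eqref{xxx}, with $\mc H=B^{-1}A$ in coordinates). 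Thus classifying $(A,B)$ up to congruence is the same as classifying Hamiltonian operators $\mc H$ on a symplectic space up to symplectic isomorphism.

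The second step is the decomposition of $V$ into $\mc H$-invariant, $\omega$-orthogonal blocks indexed by the irreducible factors of the characteristic polynomial. Since $\mc H$ is Hamiltonian, $\omega(\mc H u,v)=-\omega(u,\mc H v)$, and a standard orthogonality-of-root-spaces argument shows: the generalized eigenspace (over $\overline\ff$, or the $p(x)$-primary component over $\ff$) attached to an irreducible polynomial $p$ is $\omega$-paired with the one attached to $p^{\sharp}(x):=(-1)^{\deg p}p(-x)$, and is $\omega$-isotropic unless $p=p^{\sharp}$. This produces the dichotomy in the statement: when $p\ne p^{\sharp}$ one gets a hyperbolic-type block, which in a suitable basis is exactly $\mc P_{\Phi}$ with $\Phi$ a Frobenius block for $p$ — and the nonuniqueness is precisely the choice of $p$ versus $p^{\sharp}$, i.e. $\chi_{\Phi'}(x)=(-1)^n\chi_{\Phi}(-x)$. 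When $p=p^{\sharp}$ the block is nondegenerate and is governed by a Hermitian form over the field $\ff(\omega)=\ff[x]/p(x)$ with the involution $f(\omega)\mapsto f(-\omega)$; this is where the factors $\mc Q_{\Psi}^{f(x)}$ and the Hermitian form \eqref{777aki} come from.

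The third step is the detailed analysis of a single $p$-primary block with $p=p^{\sharp}$: here I would reduce to a free module over $\ff[x]/p(x)^s$ (or, after choosing $\Psi\in\widetilde{\mc F}_n(\ff)$, work with $\ff[\Psi]$), show using $\widetilde\Psi$ — the fixed skew-symmetric matrix with $\widetilde\Psi\Psi$ symmetric from \eqref{nhg} — that every such block is isomorphic to an orthogonal sum of copies of $(\widetilde\Psi\Psi\,f(\Psi),\widetilde\Psi f(\Psi))$ with $f$ ranging over a basis-type set $\{f:\ 0\ne f\in\ff[x^2],\ \deg f<\deg p\}$, and that two such sums are symplectically congruent iff the associated diagonal Hermitian forms $\sum f_i(\omega)x_i^{\circ}x_i$ over $\ff(\omega)$ are equivalent. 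The condition $f\in\ff[x^2]$ and the involution $\omega\mapsto-\omega$ are forced by the Hamiltonian/skew-symmetry constraint; that $\widetilde\Psi$ exists exactly on $\widetilde{\mc F}_n(\ff)$ is what makes the case split into types (i) and (ii). For part (b), once the general classification is in terms of Hermitian forms over $\ff(\omega)$, I would invoke the structure of such forms: over an algebraically closed $\ff$ every nondegenerate Hermitian form over $\ff(\omega)$ (which here is a quadratic extension with a nontrivial involution, hence "like $\cc/\rr$") is determined by its rank, collapsing (ii) to the single summand (ii$'$); over a real closed $\pp$ one gets rank plus signature, giving the two summands (ii$''$), and the corresponding uniqueness up to permutation.

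The main obstacle I expect is the third step — the fine analysis of the $p=p^{\sharp}$ block and the precise matching between symplectic congruence of the pairs $\mc Q_{\Psi}^{f_i}$ and equivalence of the Hermitian form \eqref{777aki}. One has to verify that $\widetilde\Psi f(\Psi)$ is still skew-symmetric and $\widetilde\Psi\Psi f(\Psi)$ still symmetric (immediate from \eqref{nhg} once $f\in\ff[x^2]$, since then $f(\Psi)$ is "$\circ$-Hermitian" in $\ff[\Psi]$), that the transported bilinear form on the $\ff[\Psi]$-module is $\circ$-Hermitian and nondegenerate, and — the genuinely delicate point — that changing the generating data of the module induces exactly a congruence of Hermitian matrices over $\ff(\omega)$, with the quotient by $p(x)$ (rather than $p(x)^s$) being harmless because the higher-order part contributes nothing new to the Hermitian classification. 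This is the place where the proof of \cite[Theorem 4]{ser_izv} has to be genuinely modified rather than merely quoted, since in the general theorem one does not have the luxury of $B$ nonsingular and the reduction is messier.
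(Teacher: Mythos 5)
Your proposal is correct in outline but follows a genuinely different route than the paper's proof. The paper does not pass to the Hamiltonian operator $\mc H=B^{-1}A$; it quotes the reduction of \cite{ser_izv} from pairs of bilinear forms to pairs of linear mappings $\mc M,\mc N:V\to W$ (statement \eqref{gtl}), applies Kronecker's pencil theorem, and then makes its one new move: since $\mc N$ is bijective, the two families of Kronecker summands $(I_n,\Phi)$ (with $\Phi$ nonsingular) and $(J_n(0),I_n)$ can be replaced by the single family $(\Phi,I_n)$ with arbitrary $\Phi\in\mc F_n(\ff)$, after which the bulleted verification that $\ind_1,\ \ind_0,\ \End(\mc Q_\Psi)$, etc.\ have the claimed shape lets it invoke \cite[Theorem~1]{ser_izv} wholesale. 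Your plan instead performs the classical intrinsic analysis: form $\mc H$, decompose $V$ into $p$-primary components, pair $V_p$ against $V_{p^\sharp}$ (isotropic if $p\ne p^\sharp$), and classify the self-dual blocks by Hermitian forms over $\ff(\omega)=\ff[x]/p_\Psi(x)$. Both routes exploit the nonsingularity of $B$ at the same moment --- in the paper it is what allows the unified representative $(\Phi,I_n)$, in your plan it is what makes $\mc H$ well-defined --- and both land on the same endgame. Your version is more self-contained and motivates the shape of (i) and (ii) more transparently, but your ``third step'' (reducing the form on an $\ff[x]/p^s$-module to a Hermitian form over the residue field $\ff[x]/p$) is exactly the work that the paper avoids by appealing to the general machinery; you flag that as the delicate point, and indeed it is where a blind write-up would have to do genuine labor.

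One slip worth fixing in part (b): when $\ff$ is algebraically closed, Lemma~\ref{erd}(b) forces $p_\Psi(x)=x$, so $\ff(\omega)=\ff$ with the \emph{identity} involution, not ``a quadratic extension with a nontrivial involution, hence like $\cc/\rr$.'' The conclusion that rank classifies is still correct, but it comes from the diagonalizability of symmetric forms over an algebraically closed field (\cite[Ch.~XV, Thm.~3.1]{lan}), not from Hermitian-form theory over a quadratic extension. Likewise over a real closed $\pp$, the case $p_\Psi(x)=x$ again gives $\pp(\omega)=\pp$ with the identity involution (ordinary symmetric forms, law of inertia), and the ``quadratic extension with nontrivial involution'' picture applies only when $p_\Psi(x)=x^2+b$ with $b>0$.
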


The following lemma is analogous to \cite[Theorem 8]{ser_izv}.

\begin{lemma}\label{erd}
\begin{itemize}
  \item [\rm(a)]
Let\/ $\ff$ be a field of characteristic not $2$ and let $\Psi\in{\mc F}_{n}(\ff)$. The matrix $\widetilde\Psi$ exists if and only if
\begin{equation}\label{ff}
\text{$n$ is even and either $p_{\Psi}(x)=x$ or $p_{\Psi}(x)\in\ff[x^2]$.}
\end{equation}

  \item[\rm(b)]
If\/ $\ff$ is an algebraically closed field of characteristic not $2$, then \eqref{ff} can be replaced by
\begin{equation}\label{ff1}
\text{$n$ is even and $p_{\Psi}(x)=x$.}
\end{equation}
If\/ $\ff$ is a real closed field, then \eqref{ff} can be replaced by
\begin{equation}\label{ff2}
\text{$n$ is even and either $p_{\Psi}(x)=x$ or $p_{\Psi}(x)=x^2+b$ with $b>0$.}
\end{equation}
\end{itemize}
\end{lemma}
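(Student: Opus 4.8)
The plan is to prove part~(a) by establishing the clean equivalence ``$\widetilde\Psi$ exists $\iff\chi_{\Psi}(x)\in\ff[x^2]$'', which splits into a transpose argument for necessity and an explicit $2\times2$ block construction for sufficiency; part~(b) is then a pure translation using the lists of monic irreducible polynomials over the relevant fields. First observe that the existence of $\widetilde\Psi$ depends only on the similarity class of $\Psi$: if $\Psi'=S^{-1}\Psi S$ and $M=-M^T$ is nonsingular with $M\Psi$ symmetric, then $S^TMS$ is nonsingular skew-symmetric and $(S^TMS)\Psi'=S^T(M\Psi)S$ is symmetric; since $\Psi\in\mc F_n(\ff)$ is similar to a Frobenius block, we may assume $\Psi$ is non-derogatory. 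For necessity, suppose $M=-M^T$ is nonsingular with $M\Psi$ symmetric. A nonsingular skew-symmetric matrix has even size, so $n$ is even; and $(M\Psi)^T=M\Psi$ with $M^T=-M$ gives $\Psi^T=-M\Psi M^{-1}\sim-\Psi$, whence (as $\Psi^T\sim\Psi$ always) $\Psi\sim-\Psi$ and $\chi_{\Psi}(x)=\chi_{-\Psi}(x)=(-1)^n\chi_{\Psi}(-x)=\chi_{\Psi}(-x)$. Thus $\chi_{\Psi}\in\ff[x^2]$. Writing $\chi_{\Psi}=p_{\Psi}^{\,s}$, unique factorization and comparison of degrees and leading coefficients turn $p_{\Psi}(-x)^s=p_{\Psi}(x)^s$ into $p_{\Psi}(-x)=(-1)^{\deg p_{\Psi}}p_{\Psi}(x)$; if $\deg p_{\Psi}$ is odd this says $p_{\Psi}$ is an odd polynomial, so $p_{\Psi}(x)=x$ (the only monic irreducible odd polynomial), and if $\deg p_{\Psi}$ is even it says $p_{\Psi}\in\ff[x^2]$. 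This is exactly condition \eqref{ff}, and conversely \eqref{ff} implies $\chi_{\Psi}\in\ff[x^2]$: if $p_{\Psi}=x$ then $\chi_{\Psi}=x^n\in\ff[x^2]$ because $n$ is even, and if $p_{\Psi}\in\ff[x^2]$ then $\deg p_{\Psi}$ is even, so $n$ is even and $\chi_{\Psi}=p_{\Psi}^s\in\ff[x^2]$.

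For sufficiency, assume $\chi_{\Psi}(x)=Q(x^2)$ with $Q\in\ff[y]$ monic of degree $k:=n/2$. Let $C$ be the $k\times k$ companion matrix of $Q$ and put $\Phi:=\matt{0&I_k\\C&0}$. Splitting an arbitrary polynomial as $f(x)=a(x^2)+xb(x^2)$ and using $\Phi^2=\matt{C&0\\0&C}$ gives $f(\Phi)=\matt{a(C)&b(C)\\Cb(C)&a(C)}$, so $f(\Phi)=0$ iff $a(C)=b(C)=0$; since $C$ is non-derogatory of degree $k$, no nonzero $f$ of degree below $n$ annihilates $\Phi$, hence $\Phi$ is non-derogatory with $\chi_{\Phi}(x)=\chi_C(x^2)=Q(x^2)=\chi_{\Psi}(x)$, so $\Phi\sim\Psi$. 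Now let $D$ be the classical Hankel symmetrizer of the companion matrix $C$: $D=D^T$ is nonsingular and $DC$ is symmetric. Then $M:=\matt{0&D\\-D&0}$ is nonsingular and skew-symmetric, and $M\Phi=\matt{DC&0\\0&-D}$ is symmetric. Hence $\widetilde\Phi$ exists, and by the similarity reduction so does $\widetilde\Psi$. This proves~(a).

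Part~(b) is obtained by simplifying \eqref{ff} with the classification of monic irreducible polynomials. Over an algebraically closed field every monic irreducible is linear, and a linear polynomial is never in $\ff[x^2]$, so \eqref{ff} collapses to \eqref{ff1}. Over a real closed field $\pp$ the monic irreducibles are $x-a$ with $a\in\pp$ and $x^2+\beta x+\gamma$ with $\beta^2<4\gamma$; again a linear polynomial is never in $\pp[x^2]$, while $x^2+\beta x+\gamma\in\pp[x^2]$ iff $\beta=0$, and then irreducibility forces $\gamma>0$. Hence \eqref{ff} becomes \eqref{ff2}. The uniqueness ``up to permutations'' claimed in Theorem~\ref{thm1}(b) for the summands (ii$'$), (ii$''$) will follow from part~(a) of that theorem, not from this lemma.

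The only genuinely external ingredient is the Hankel symmetrizer of a companion matrix, i.e.\ the classical fact that a companion matrix $C$ admits a nonsingular symmetric $D$ with $DC$ symmetric; once this is paired with the block model $\Psi\sim\matt{0&I_k\\C&0}$, everything else is bookkeeping with transposes and characteristic polynomials. The step to watch is precisely that last similarity: it rests on the minimal-polynomial computation showing $\matt{0&I_k\\C&0}$ is non-derogatory, which is what lets one descend from the intrinsic hypothesis $\chi_{\Psi}\in\ff[x^2]$ to the concrete matrix on which the skew-symmetric $M$ is written down.
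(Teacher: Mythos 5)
Your proof is correct, and the sufficiency half of part (a) takes a genuinely different route from the paper. The necessity argument is essentially the paper's: both deduce $\Psi\sim-\Psi^T$ from $\widetilde\Psi\Psi=-\Psi^T\widetilde\Psi$, conclude $\chi_\Psi(x)=(-1)^n\chi_\Psi(-x)$, and split into the odd/even $\deg p_\Psi$ cases. For sufficiency the paper argues in two disjoint cases: when $p_\Psi(x)=x$ it writes $\widetilde\Psi$ down directly as $S^T(\text{antidiagonal }\pm1)S$ for $\Psi=S^{-1}J_n(0)S$, and when $p_\Psi\in\ff[x^2]$ it passes to the Frobenius block $\Phi$ similar to $\Psi^{-1}$ (so that $\chi_\Phi\in\ff[x^2]$), imports from \cite[Theorem 8]{ser_izv} a nonsingular symmetric $\Gamma$ with $\Gamma\Phi$ skew-symmetric, and sets $\widetilde\Psi=S^T\Gamma\Phi S$; the detour through $\Psi^{-1}$ is forced because that cited theorem produces a \emph{symmetric} symmetrizer, while here one wants a \emph{skew} one, and inversion plus right multiplication by $\Phi$ performs the conversion. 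You instead collapse both cases into the single intrinsic condition $\chi_\Psi\in\ff[x^2]$, realize $\Psi$ (up to similarity, which you correctly note is all that matters for existence of $\widetilde\Psi$) by the companion-like block $\Phi=\left[\begin{smallmatrix}0&I_k\\C&0\end{smallmatrix}\right]$ with $C$ the companion matrix of $Q$ where $\chi_\Psi(x)=Q(x^2)$, verify via the minimal-polynomial computation that $\Phi$ is non-derogatory with $\chi_\Phi=\chi_\Psi$, and then exhibit $M=\left[\begin{smallmatrix}0&D\\-D&0\end{smallmatrix}\right]$ with $D$ the classical nonsingular symmetric (Hankel) symmetrizer of $C$, so that $M\Phi=\left[\begin{smallmatrix}DC&0\\0&-D\end{smallmatrix}\right]$ is symmetric. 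Your route is self-contained modulo only the classical Hankel-symmetrizer fact, avoids the inversion trick (and thus also the need to treat the nilpotent case $p_\Psi=x$ separately, where $\Psi^{-1}$ does not exist), and gives one unified construction; the paper's route is shorter on the page because it delegates the construction to \cite[Theorem 8]{ser_izv}, and it produces the specific normal forms that are then reused verbatim in the proof of Theorem~\ref{thm1}. Part (b) of your argument is the same brief translation via the list of monic irreducibles over algebraically closed and real closed fields, which the paper dismisses as obvious.
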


\begin{proof}[Constructive proof] (a) Suppose that $\widetilde\Psi$ exists.  Since the $n\times n$ matrix $\widetilde\Psi$ is nonsingular and skew symmetric, $n$ is even. Since $\widetilde\Psi\Psi=(\widetilde\Psi\Psi)^T =\Psi^T\widetilde\Psi^T=-\Psi^T\widetilde\Psi,
$ $\Psi$ is similar to $-\Psi^T$. Hence $\det(xI_n-\Psi)= \det(xI_n+\Psi)=(-1)^n\det(-xI_n-\Psi)$, $\chi_{\Psi}(x)=(-1)^n\chi_{\Psi}(-x)$, and we have $p_{\Psi}(x)=\pm p_{\Psi}(-x)$. If $\deg p_{\Psi}(x)$ is odd, then $p_{\Psi}(x)=- p_{\Psi}(-x)$, $p_{\Psi}(0)=- p_{\Psi}(0)=0$, $x|p_{\Psi}(x)$, and so $p_{\Psi}(x)=x$.  If $\deg p_{\Psi}(x)$ is even, then $p_{\Psi}(x)=p_{\Psi}(-x)$, and so $p_{\Psi}(x)\in\ff[x^2]$. We have proved  \eqref{ff}.

Conversely, suppose that  \eqref{ff} holds.

If  $p_{\Psi}(x)=x$, then $\Psi=S^{-1}J_n(0)S$ with a nonsingular $S$. We take
\[
\widetilde\Psi:=S^T\matt{0&&&&1\\&&&-1\\&&1\\&-1
\\ \iddots&&&&0}S.
\]
Then
\[
\widetilde\Psi\Psi=S^T\matt{0&&&&0\\&&&0&-1
\\&&0&1\\&0&-1
\\ \iddots&\iddots&&&0}S,
\]
and we have proved \eqref{nhg}.

Now let $p_{\Psi}(x)\in\ff[x^2]$. Let $\Phi$ be the Frobenius block that is similar to $\Psi^{-1}$; that is, $\Psi=S^{-1}\Phi^{-1} S$ for a nonsingular $S$. Then
\begin{align*}
\chi_{\Phi}(x)&=\det(xI_n-\Psi^{-1})=\det(\Psi^{-1})
x^n\det(\Psi-x^{-1}I_n)\\
&=\det(\Psi^{-1})
x^n\chi_{\Psi}(x^{-1})
\in\ff[x^2].
\end{align*}
The existence of a nonsingular matrix $\Gamma$ such that
\[
\Gamma=\Gamma^T,\qquad
\Gamma\Phi=-(\Gamma\Phi)^T
\]
is proved and $\Gamma$ is constructed explicitly in \cite[Theorem 8]{ser_izv}.
The equalities \eqref{nhg} hold for $\widetilde\Psi:=S^T\Gamma\Phi S$ since
\begin{align*}
\widetilde\Psi^T&=S^T(\Gamma\Phi)^T S = -S^T(\Gamma\Phi) S=-\widetilde\Psi^T,
\\
(\widetilde\Psi\Psi)^T&=
(S^T\Gamma\Phi S\cdot
S^{-1}\Phi^{-1} S)^T=
(S^T\Gamma S)^T=S^T\Gamma^TS
         \\&=S^T\Gamma S=
 S^T\Gamma\Phi S\cdot
S^{-1}\Phi^{-1} S=\widetilde\Psi\Psi.
\end{align*}

(b) This statement is obvious.
\end{proof}

\begin{proof}[Proof of Theorem \ref{thm1}]
By the proof of \cite[Theorem 4]{ser_izv},
\begin{equation}\label{gtl}
\parbox[c]{0.8\textwidth}{the problem of classifying pairs of bilinear forms $\mc A,\mc B:U\times U\to\ff$ over $\ff$, in which $\mc A$ is  symmetric and $\mc B$ is skew-symmetric, reduces to the problem of classifying pairs of linear mappings
$\mc M,\mc N :  V\to W$  over $\ff$. If $\mc B$ is nonsingular, then $\mc N$ is bijective. }
\end{equation}
By Kronecker's theorem for matrix pencils, for each pair of linear mappings $\mc M,\mc N:V\to   W$ there exist bases of $V$ and $W$, in which the pair $(M,N)$ of matrices of $\mc M$ and $\mc N$ is a direct sum of pairs of the form
\begin{equation}\label{smy}
(I_n,\Phi),\ \ (J_n(0),I_n),\ \ (L_n,R_n),\ \ (L_n^T,R_n^T),\qquad n=1,2,\dots,
\end{equation}
where $\Phi\in \mc F_n(\ff)$ and
\[
L_n:=\mat{1&0&&0\\[-8pt]&\ddots&\ddots\\0&&1&0},\quad
R_n:=\mat{0&1&&0\\[-8pt]&\ddots&\ddots\\0&&0&1}
\qquad((n-1)\text{-by-}n),
\]
and this sum is uniquely determined by $\mc M$ and $\mc N$, up to permutations of summands.

(a) The set of pairs \eqref{smy} is used in the proof of \cite[Theorem 4]{ser_izv}.
Let $\mc B$ in \eqref{gtl} be nonsingular; then $\mc N$ is bijective.  By \eqref{smy}, the pair $(M,N)$ of matrices of $\mc M$ and $\mc N$ is a direct sum of pairs of two types:
\begin{equation}\label{lki}
\text{$(I_n,\Phi)$ with nonsingular $\Phi$,\qquad $ (J_n(0),I_n)$.}
\end{equation}
They give four types of summands.

In contrast, we use summands of the type $(\Phi,I_n)$ with any $\Phi\in \mc F_n(\ff)$  instead of \eqref{lki} and obtain only the two types of summands in Theorem \ref{thm1}. The reasoning for $(\Phi,I_n)$ is  analogous to the  reasoning for $(I_n,\Phi)$  in the proof of \cite[Theorem 4]{ser_izv}:  we consistently find (in the definitions and notations from \cite{ser_izv}) that
\begin{itemize}
  \item
$
\overline{S}:\xymatrix{
 {v}
 \ar@/^1.5pc/@{->}[rr]^{\alpha}
  \ar@/^/@{->}[rr]^{\alpha^*}
 \ar@/_/@{->}[rr]_{\beta}
 \ar@/_1.5pc/@{->}[rr]_{\beta^*} &&{v^*}
 }$ in which $
 \alpha=\alpha^*$,
 $\beta=-\beta^*$, the arrows $\beta$ and $\beta^*$ are assigned by bijective mappings, and representations of the quiver $ \overline{S}$ can be given by pairs of matrices of the same size;

  \item $\ind(\overline S)$ consists of the pairs $(\Phi,I_n)$, in which $\Phi\in \mc F_n(\ff)$;

  \item $\ind_1(\overline S)$ consists of the pairs $(\Phi,I_n)$, in which $\Phi\in{\mc F}_{n}(\ff)\smallsetminus\widetilde{\mc F}_{n}(\ff)$ and $\Phi$ is determined up to replacement
by $\Phi'\in{\mc F}_{n}(\ff)$ with
$\chi_{\Phi'}(x)
=(-1)^n\chi_{\Phi}(- x)$;

  \item $\ind_0(\overline S)$ consists of the pairs $\mc Q_{\Psi}:=(\widetilde \Psi\Psi,\widetilde \Psi)$, in which $\Psi\in\widetilde{\mc F}_{n}(\ff)$;

 \item the pairs $g_f:=(f(\Psi),f(-\Psi^T))$ with $f(x)\in\ff[x]$ form the ring $\End(\mc Q_{\Psi})$ with involution $g_f\mapsto g_f^{\circ}=(f(-\Psi),f(\Psi^T))$;

\item  the field $\mathbb T(\mc Q_{\Psi}):=\End (\mc Q_{\Psi})/\rad(\End (\mc Q_{\Psi}))$ can be identified with the field $\ff(\omega ):=\ff[x]/p_{\Psi}(x)\ff[x]$ with involution $f(\omega)\mapsto f(\omega)^{\circ}:=f(-\omega)$;

 \item  the orbit of $\mc Q_{\Psi}$ consists of the pairs $\mc Q_{\Psi}^{f(\omega)}:=\mc Q_{\Psi}f(\Psi)$, in which $0\ne f(x)\in\ff[x^2]$ and $\deg f(x)<\deg p_{\Psi}(x)$,
\end{itemize}
and apply \cite[Theorem 1]{ser_izv}. \medskip

(b) We could use \cite[Theorem 2]{ser_izv}, but we give a direct proof.

Let $\ff$ be an algebraically closed field of characteristic not $2$, and let  $\Psi\in\widetilde{\mc F}_{n}(\ff)$. By \eqref{ff1}, $p_{\Psi}(x)=x$, and so  the field $\ff(\omega)= {\ff}[x]/p_{\Psi}(x){ \ff}[x]$   from Theorem \ref{thm1}(a) is $\ff$ with the identity involution. By \cite[Chapter XV, Theorem 3.1]{lan},  the symmetric form \eqref{777aki}  is equivalent to the form
$x_1^{\circ}x_1+\dots+
x_s^{\circ}x_s.$
Therefore, all summands $\mc Q_{\Psi}^{f (x)}=(\widetilde \Psi \Psi ,\widetilde \Psi )f(\Psi )$  in Theorem \ref{thm1}(a) are congruent to $\mc Q_{\Psi }^{1}=(\widetilde \Psi \Psi ,\widetilde \Psi )$.

Let $\ff$ be a real closed field, and let $\Psi\in\widetilde{\mc F}_{n}(\ff)$. By \eqref{ff2},  $\ff(\omega)= \ff$ if $p_{\Psi}(x)=x$ and $\ff(\omega)=\overline{\ff}$ if $p_{\Psi}(x)=x^2+b$ with $b>0$. In the second case, the involution $f(\omega)^{\circ}=
f(-\omega)$ is not the identity. By the law of inertia (see \cite[Chapter XV, Corollaries 4.3 and 5.3]{lan}),  the symmetric or Hermitian form \eqref{777aki}  is equivalent to exactly one form
\[
-x_1^{\circ}x_1-\dots-x_r^{\circ}x_r+
x_{r+1}^{\circ}x_{r+1}+\dots+
x_s^{\circ}x_s.
\]
Therefore, all summands $\mc Q_{\Psi }^{f (x)}=(\widetilde \Psi \Psi ,\widetilde \Psi )f(\Psi )$ in Theorem \ref{thm1}(a) are congruent to $\mc Q_{\Psi }^{\pm 1}=\pm(\widetilde \Psi \Psi ,\widetilde \Psi )$; these summands
are uniquely determined up to permutations.
\end{proof}

\section{Quadratic forms on symplectic spaces}\label{sen}

We consider $2n\times 2n$ matrices that are partitioned into four $n\times n$ blocks. The \emph{block-direct sum} of such matrices is defined as follows:
\[
\arr{{c|c}A_1&A_2\\\hline A_3&A_4}
\boxplus\dots\boxplus
\arr{{c|c}D_1&D_2\\\hline D_3&D_4}
:=
\arr{{c|c}A_1\oplus\dots\oplus D_1
&A_2\oplus\dots\oplus D_2
\\\hline A_3\oplus\dots\oplus D_3
&A_4\oplus\dots\oplus D_4
}.
\]

Define the  $2n\times 2n$ symmetric  matrix
\begin{equation*}\label{m1}
P_{n}:=\arr{{c|c}\ma{&&&1\\[-5pt]
&&\iddots\\&1\\[-5pt]1}&0\\
     \hline
     0&\ma{&&&0\\[-9pt]&&\iddots&1
\\[-7pt]&0&\iddots\\0&1}}
\end{equation*}
over any field. For each $c>0$ from
a real closed field $\pp$, define the  $2n\times 2n$  symmetric matrix $Q_{n}(c)$ over $\pp$:
\[
Q_{n}(c):=\arr{{cccccc|cccccc}
\!\!&&&&& c& 0&1&&&
   \\
&&&&-c&& & 0&1&&&
   \\
&&&c&&&& &0&1&&
  \\[-5pt]
&&-c&&&&& &&0&\ddots&
  \\[-5pt]
&\iddots&&&&&& &&&\ddots&1
  \\
c&&&&&&&   &&&&0
                 \\
                 \hline
\!\!0&&&&& & &&&&&c
    \\
\!\!1&0&&&&&&&&&-c
   \\
&1&0&&&&&&&c
    \\
&&1&0&&&&&-c
   \\[-5pt]
&&&\ddots&\ddots&&&\iddots
   \\[-2pt]
&&&&1&0&c
}\text{if $n$ is odd}
\]
and
 \begin{equation*}\label{m2}
Q_{n}(c):=\arr{{cccc|cccc}&&& & C&I_2&&
  \\[-9pt]
&&&& &C&\ddots&
   \\[-9pt]
&&&& &&\ddots&I_2
   \\
&&&& &&&C
   \\   \hline
\!\!\vphantom{A^{A^{A^a}}}C^T&&&&I_2&&&
   \\
\!\!I_2&C^T&&&&
    \\[-5pt]
&\ddots&\ddots&&&
    \\
&&I_2&C^T
} \text{if $n$ is even},
 \end{equation*}
in which
$C:=\matt{0&1\\-c^2&0}$ and
the unspecified entries are zero.

In the following theorem, we give canonical matrices of quadratic forms and Hamiltonian operators on a symplectic space over an algebraically closed or real closed field of characteristic not 2.  Analogous canonical matrices of quadratic forms and Hamiltonian operators on real symplectic spaces are given in \cite[Chapter 1, \S\,2.4]{arn1} and \cite{burg,burg1}; they are based on Williamson's article \cite{will}.
\begin{theorem}\label{thm}
\begin{itemize}
  \item[\rm(a)]
For each quadratic form on a symplectic space over an algebraically closed field\/ $\ff$  of characteristic not $2$ $($in particular, over the field of complex numbers$)$, there exists a symplectic basis in which its matrix is a block-direct sum, uniquely determined up to permutations of block-direct summands, of matrices of two types:
\begin{equation}\label{rrd}
 \arr{{c|c}0&J_n(a)\\
\hline\vphantom{A^{A^A}}J_n(a)^T&0}\ (a\in\ff),\quad P_{n},
\end{equation}
in which $a\ne 0$ if $n$ is even and $a$ is determined up to replacement by $-a$.

  \item[\rm(b)]
For each quadratic form on a symplectic space over a real closed field\/ $\pp$ $($in particular, over the field of real numbers$)$, there exists a symplectic basis in which its matrix is a block-direct sum, uniquely determined up to permutations of block-direct summands, of matrices of three types:
\begin{equation}\label{thu}
 \arr{{c|c}0&\Phi_n\\\hline
\vphantom{A^{A^{A^a}}}\Phi_n^T&0},\quad
\pm P_{n},\quad
\pm Q_{n}(c)\ (c\in\pp,\ c>0),
\end{equation}
in which
\begin{equation}\label{mho}
\Phi_n:=\left\{
         \begin{array}{l}
           J_n(a) \hbox{ with $a\in\pp$ and $a\ge 0$; $n$ is odd if $a=0$; or} \\
           J_{n/2}(a+bi)^{\pp} \hbox{ with $a,b\in\pp$, $a\ge 0$, $b>0$, and even $n$,}
         \end{array}
       \right.
\end{equation}
in which\/ $J_{n/2}(a+bi)^{\pp}$ is defined in  \eqref{tys}.

  \item[\rm(c)] For each Hamiltonian operator on a symplectic space over a real closed field\/ $\pp$, there is a symplectic basis in which its matrix is a block-direct sum, uniquely determined up to permutations of block-direct summands, of matrices of three types:
\begin{equation}\label{thh}
 \arr{{c|c}\Phi_n&0\\\hline
\vphantom{A^{A^{A^a}}}0&-\Phi_n^T},\quad
\pm \Omega_nP_{n},\quad
\pm \Omega_nQ_{n}(c) \ (c\in\pp,\ c>0),
\end{equation}
in which $\Phi_n$ is defined in \eqref{mho}.
\end{itemize}
\end{theorem}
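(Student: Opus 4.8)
The plan is to deduce Theorem \ref{thm} from Theorem \ref{thm1}(b) and Lemma \ref{erd}(b); no new structural input is needed, only a translation into symplectic bases together with some explicit bookkeeping.

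First I would record the dictionary. A quadratic form on a symplectic space is given in a symplectic basis by a symmetric matrix $A$ paired with $\Omega_n$, and two such forms are isomorphic exactly when the pairs $(A,\Omega_n)$ and $(A',\Omega_n)$ are congruent, i.e.\ when $A$ and $A'$ are symplectically congruent. By \eqref{xxx}, part (c) follows from part (b) by left multiplication of every canonical matrix by $\Omega_n$, so I would treat (c) last and briefly: $\Omega_n(\pm P_n)=\pm\Omega_nP_n$ and $\Omega_n(\pm Q_n(c))=\pm\Omega_nQ_n(c)$ need no further work, while $\Omega_n\arr{{c|c}0&\Phi_n\\\hline\Phi_n^T&0}=\mat{\Phi_n^T&0\\0&-\Phi_n}$ is carried to $\arr{{c|c}\Phi_n&0\\\hline 0&-\Phi_n^T}$ by the block-diagonal symplectic similarity $\diag(R^{-1},R)$, where $R=R^T$ is a symmetric matrix with $R^{-1}\Phi_nR=\Phi_n^T$ (such $R$ exists since a Frobenius block, hence a Jordan block and a realified Jordan block, is similar to its transpose via a symmetric matrix).

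The core of the argument is parts (a) and (b). Applying Theorem \ref{thm1}(b) to $(A,\Omega_n)$ produces a congruence to a direct sum of pairs of types (i) and (ii$'$), resp.\ (ii$''$). Using Lemma \ref{erd}(b) I would list the admissible blocks: over an algebraically closed $\ff$ one has $\widetilde{\mc F}_n(\ff)=\{J_n(0): n\text{ even}\}$, so the type-(i) summands are $\mc P_{J_n(a)}$ with $a\neq0$ when $n$ is even ($a$ defined up to the sign change $\chi_{\Phi'}(x)=(-1)^n\chi_\Phi(-x)$, i.e.\ $a\mapsto-a$), and the type-(ii$'$) summands are $(\widetilde\Psi\Psi,\widetilde\Psi)$ with $\Psi=J_{2m}(0)$; over a real closed $\pp$, $\widetilde{\mc F}_n(\pp)$ additionally contains the realified blocks $J_m(ci)^{\pp}$ with $c>0$ (their $p_\Psi(x)=x^2+c^2$), which is what produces the $Q$-summands and the sign ambiguity in (ii$''$). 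Now the one genuine computation: for each summand I must exhibit a congruence to a pair whose second component is $\Omega_{m}$ and read off the first component. For $\mc P_\Phi$ there is nothing to do, since its second component is already $\Omega_n$ and its first component is the first matrix in \eqref{rrd} (resp.\ \eqref{thu}, with $\Phi_n$ as in \eqref{mho}). For a summand $(\widetilde\Psi\Psi,\widetilde\Psi)$ I would take the explicit $\widetilde\Psi$ built in the constructive proof of Lemma \ref{erd} — the anti-diagonal matrix with alternating signs $\pm1$ when $\Psi=J_{2m}(0)$, and $S^T\Gamma\Phi S$ with $\Gamma$ from \cite[Theorem 8]{ser_izv} when $p_\Psi(x)=x^2+c^2$ — and produce a congruence $T$ with $T^T\widetilde\Psi\,T=\Omega_m$; the assertion is that $T$ can be chosen so that simultaneously $T^T\widetilde\Psi\Psi\,T=P_m$ in the first case and $=Q_m(c)$ in the second. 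This verification — reconciling the abstract summand with the concrete matrices $P_n$ and $Q_n(c)$, including why $Q_n(c)$ splits into an ``$n$ odd'' and an ``$n$ even'' shape — is where I expect the real work, and the main obstacle, to lie; it is a determined but lengthy calculation with the nilpotent/companion structure.

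Finally I would reassemble. The canonical direct sum from the previous step has the form $\bigoplus_k(C_k,\Omega_{m_k})$; the basis permutation carrying $\bigoplus_k\Omega_{m_k}$ to the single standard $\Omega_{m_1+\cdots+m_r}$ simultaneously carries $\bigoplus_k C_k$ to the block-direct sum $\boxplus_k C_k$ — this is exactly the purpose of the operation $\boxplus$ — so $A$ is symplectically congruent to the claimed $\boxplus$ of matrices in \eqref{rrd}, resp.\ \eqref{thu}, and left multiplication by $\Omega_n$ then gives \eqref{thh}. Uniqueness in all three parts is inherited verbatim from the uniqueness statements of Theorem \ref{thm1}(b): the type-(i) data up to the substitution $a\mapsto-a$, and the type-(ii$'$)/(ii$''$) data up to permutation of summands.
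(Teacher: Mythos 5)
Your overall skeleton is right and matches the paper's: reduce to Theorem~\ref{thm1}(b), reassemble the direct sum of pairs into a block-direct sum $\boxplus$ against a single standard $\Omega_m$, inherit uniqueness, and obtain (c) from (b) via left multiplication by $\Omega_n$ as in \eqref{xxx}. Your treatment of (c) (conjugating $\diag(\Phi_n^T,-\Phi_n)$ by the symplectic $\diag(R^{-1},R)$ with $R=R^T$, $R^{-1}\Phi_nR=\Phi_n^T$) is a valid alternative to the paper's device of replacing $\Phi_n$ by $\Phi_n^T$ in the choice of $\mc F_n$.

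However, there is a genuine gap at precisely the place you flag as ``where I expect the real work, and the main obstacle, to lie.'' You keep the standard choice of $\mc F_n(\ff)$ (Jordan blocks $J_{2m}(0)$, realifications $J_m(a+bi)^{\pp}$), take the $\widetilde\Psi$ produced in the constructive proof of Lemma~\ref{erd}, and then \emph{assert} that some congruence $T$ sends $\widetilde\Psi\mapsto\Omega_m$ and \emph{simultaneously} sends $\widetilde\Psi\Psi$ to $P_m$ (resp.\ $Q_m(c)$). Nothing you write establishes this; you do not even show such a $T$ exists, let alone that the first component lands on the specific matrices $P_n$, $Q_n(c)$ of the theorem (indeed, why those particular shapes, including the odd/even dichotomy in $Q_n(c)$, arise is entirely unexplained). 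Moreover for the type-(ii$''$) summand with a minus sign you would need $T^T(-\widetilde\Psi)T=\Omega_m$, a different normalization, and you do not address it; the paper handles this by showing $-(B,\Omega_n)$ is congruent to $(-B,\Omega_n)$, using that every square matrix over $\pp$ is congruent to its transpose.

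The paper's key idea (its Lemma~\ref{ngi}) is to avoid this congruence computation altogether by choosing $\mc F_{2n}$ differently: it takes $\widetilde{\mc F}_{2n}$ to consist of the matrices $-\Omega_nP_n$ and $-\Omega_nQ_n(c)$ themselves, so that one can simply set $\widetilde\Psi:=\Omega_n$, and then $\widetilde\Psi\Psi=P_n$ or $Q_n(c)$ with no further congruence needed. Since $\mc F_n(\ff)$ is only required to contain \emph{some} matrix in each Frobenius similarity class, all that remains is a similarity check: that $-\Omega_nP_n$ is nilpotent of rank $2n-1$ (hence similar to $J_{2n}(0)$), and that $\Omega_nQ_n(c)$ is similar over $\overline\pp$ to $J_n(ci)\oplus J_n(-ci)$ — this is done by a rank count on $ciI-\Omega_nQ_n(c)$, separated into the even and odd cases, which is where the two shapes of $Q_n(c)$ come from. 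This ``work backwards from the target canonical form'' step is what your proposal is missing; replacing it by an uncarried-out ``determined but lengthy calculation'' leaves the theorem unproved.
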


Theorem \ref{thm} implies Williamson's theorem  \eqref{ere}. Indeed, if $A$ is a positive definite symmetric real matrix, then it is real symplectically congruent to a block-direct sum of positive definite matrices of types \eqref{thu}. Among the matrices \eqref{thu}, only $Q_1(c)=\diag(c,c)$ with $c>0$ is positive definite since the diagonal entries of each positive definite matrix are positive numbers. Hence, $A$ is symplectically congruent to $ D\oplus D$, in which $D=\diag(\alpha_1,\dots,\alpha_n)$ and $\alpha_1\ge\dots\ge \alpha_n>0$.

Our proof of Theorem \ref{thm} is based on the following lemma.

\begin{lemma}\label{ngi}\begin{itemize}
                          \item[\rm(a)]
If $n$ is odd, then $\widetilde{\mc F}_n(\ff)=\varnothing$. If\/ $\ff$ is an algebraically closed field of characteristic not $2$, then $\widetilde{\mc F}_{2n}(\ff)$ consists of one matrix;
we can take $\widetilde{\mc F}_{2n}(\ff)=\{-\Omega_nP_{n}\}$.
If\/ $\ff$ is a real closed field,  then we can take
$\widetilde{\mc F}_{2n}(\ff)=\{-\Omega_nP_{n},
-\Omega_nQ_{n}(c)|\,c>0\}$.

                          \item[\rm(b)]
If\/ $\widetilde{\mc F}_{2n}(\ff)$ is taken as in {\rm(a)} and $\Psi\in\widetilde{\mc F}_{2n}(\ff)$, then
 we can take $\widetilde \Psi:=\Omega_n$.
                        \end{itemize}
\end{lemma}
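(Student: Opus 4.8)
The plan is to read off from Lemma~\ref{erd} which similarity classes can occur in $\widetilde{\mc F}_{2n}(\ff)$, then to verify by direct matrix computation that $-\Omega_nP_n$, and in the real closed case the matrices $-\Omega_nQ_n(c)$, are legitimate representatives of those classes (so that $\mc F_{2n}(\ff)$ may be built from them); part~(b) then follows at once. First I would dispose of the odd case: if an $n\times n$ matrix $\Psi$ admits a $\widetilde\Psi$, then $\widetilde\Psi$ is a nonsingular skew-symmetric $n\times n$ matrix, so $n$ is even; hence $\widetilde{\mc F}_n(\ff)=\varnothing$ for odd $n$. For even size $2n$ and $\ff$ algebraically closed, Lemma~\ref{erd}(b) gives that $\widetilde{\mc F}_{2n}(\ff)$ consists of the $\Psi\in\mc F_{2n}(\ff)$ with $p_\Psi(x)=x$; since $\chi_\Psi(x)=p_\Psi(x)^s$ has degree $2n$, this forces $\Psi$ to be similar to $J_{2n}(0)$, a single similarity class, so $\widetilde{\mc F}_{2n}(\ff)$ is a one-element set. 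For $\ff$ real closed, Lemma~\ref{erd}(b) adds, for each $b>0$, the class of $\Psi$ with $p_\Psi(x)=x^2+b$, for which $\chi_\Psi(x)=(x^2+b)^n$ — again one class, namely that of the Frobenius block of $(x^2+b)^n$, equivalently of $J_n(\sqrt{b}\,i)^{\pp}$. Since $\mc F_{2n}(\ff)$ may be formed with any prescribed matrix inside each similarity class, part~(a) reduces to two claims: $(\mathrm i)$ $-\Omega_nP_n$ is similar to $J_{2n}(0)$; and $(\mathrm{ii})$ for $\pp$ real closed and $c>0$, $-\Omega_nQ_n(c)$ is similar to the Frobenius block of $(x^2+c^2)^n$ (so $b=c^2$).

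For $(\mathrm i)$ I would use the block form $P_n=E\oplus G$, where $E$ is the backward identity and $G$ the matrix with ones on the secondary anti-diagonal $\{(i,j):i+j=n+2\}$. Then $\Omega_nP_n=\matt{0&G\\-E&0}$, so $(\Omega_nP_n)^2=(-GE)\oplus(-EG)$, and a short computation gives $GE=J_n(0)^T$ and $EG=J_n(0)$, both nilpotent; hence $\Omega_nP_n$ is nilpotent, and since $\Omega_n$ is invertible, $\operatorname{rank}\Omega_nP_n=\operatorname{rank}P_n=n+(n-1)=2n-1$. A nilpotent $2n\times2n$ matrix of rank $2n-1$ has a single Jordan block, so $\Omega_nP_n\sim J_{2n}(0)\sim-\Omega_nP_n$, proving $(\mathrm i)$.

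For $(\mathrm{ii})$ I would use the block form $Q_n(c)=\matt{0&U\\U^T&W}$, with $U$ the upper block-bidiagonal matrix with diagonal blocks $C$ and off-diagonal blocks $I_2$ and $W$ the corner block appearing in the displayed definition (this is the even-$n$ form of $Q_n(c)$; the odd case is the analogous computation with the other displayed form). Then $\Omega_nQ_n(c)=\matt{U^T&W\\0&-U}$, whose characteristic polynomial is $\chi_U(x)\chi_{-U}(x)=(x^2+c^2)^n$. The point is that this matrix is nonderogatory: putting $T:=U^2+c^2I$, which is nilpotent of index $n/2$, one has $(\Omega_nQ_n(c))^2+c^2I=\matt{T^T&B\\0&T}$ with $B:=U^TW-WU$, and one checks $(T^T)^{n/2-1}B\,T^{n/2-1}\ne0$ (only the corner term of $B$ survives, and $C$ is invertible with $C^T\ne C$), whence this matrix is nilpotent of index exactly $n$; therefore $\Omega_nQ_n(c)$, and so $-\Omega_nQ_n(c)$, has minimal polynomial $(x^2+c^2)^n$ and is a Frobenius block. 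I expect this last point — that the small coupling block $W$ forces the full power $(x^2+c^2)^n$ in the minimal polynomial, uniformly in $c$ and in the parity of $n$ — to be the main obstacle; everything else is bookkeeping.

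Finally, part~(b): with $\widetilde{\mc F}_{2n}(\ff)$ chosen as in~(a), each $\Psi\in\widetilde{\mc F}_{2n}(\ff)$ has the form $\Psi=-\Omega_nM$ with $M$ a symmetric matrix ($M=P_n$, or $M=Q_n(c)$ in the real closed case). Since $\Omega_n^{2}=-I_{2n}$, we get $\Omega_n\Psi=\Omega_n(-\Omega_nM)=M=M^T$, while $\Omega_n$ is nonsingular and $\Omega_n^T=-\Omega_n$. Thus $\Omega_n$ satisfies the conditions~\eqref{nhg} required of $\widetilde\Psi$, so we may take $\widetilde\Psi:=\Omega_n$.
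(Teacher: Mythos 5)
Your treatment of the odd-size case, the $P_n$ case, and part (b) agrees with the paper and is correct. The interesting divergence is in how you handle $-\Omega_nQ_n(c)$. The paper conjugates $\Omega_nQ_n(c)$ to an explicit block form $\Phi$, further conjugates by $R\oplus\dots\oplus R$, and then shows $\operatorname{rank}(\Phi - ciI_{2n})=2n-1$ by inspecting a block-triangular submatrix (even $n$), respectively by a determinant recursion (odd $n$). You instead argue nonderogacy directly: write $\Omega_nQ_n(c)=\left[\begin{smallmatrix}U^T & W\\ 0 & -U\end{smallmatrix}\right]$, read off $\chi(x)=(x^2+c^2)^n$ from the block-triangular form, and show the nilpotency index of $(\Omega_nQ_n(c))^2+c^2I=\left[\begin{smallmatrix}T^T & B\\ 0 & T\end{smallmatrix}\right]$ is exactly $n$ by checking $(T^T)^{n/2-1}B\,T^{n/2-1}\ne 0$, the single surviving term in the block-triangular expansion. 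I checked this: $T^{n/2-1}=2^{n/2-1}N^{n/2-1}\otimes C^{n/2-1}$ concentrates in the corner, $B$ has $(1,1)$-block $C^T-C=(1+c^2)\left[\begin{smallmatrix}0&-1\\1&0\end{smallmatrix}\right]\ne 0$, and $C$ is invertible, so the product is nonzero. This is a valid and arguably more conceptual alternative for even $n$.

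The gap is the odd case of $Q_n(c)$, which you dismiss as ``the analogous computation.'' It is not analogous: for odd $n$ the displayed $Q_n(c)$ has a \emph{nonzero} upper-left block (an anti-diagonal with alternating entries $\pm c$), so $Q_n(c)$ is no longer of the form $\left[\begin{smallmatrix}0 & U\\ U^T & W\end{smallmatrix}\right]$ and $\Omega_nQ_n(c)$ is no longer block upper-triangular. Consequently the characteristic polynomial cannot be read off blockwise, $T$ and $T^T$ do not appear as the diagonal blocks of $(\Omega_nQ_n(c))^2+c^2I$, and the ``only the corner term of $B$ survives'' step has no counterpart. Your strategy of showing $\chi=\mu=(x^2+c^2)^n$ can certainly still be made to work, but a genuinely separate computation is needed; the paper handles it with a determinant recursion giving $\chi_n(x)=(x^2+c^2)\chi_{n-1}(x)$ and another recursion $\det R_n=2ci\det R_{n-1}$ to pin down the rank of $ciI_{2n}-\Omega_nQ_n(c)$. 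Until you supply an argument of this kind for odd $n$, the real closed portion of part (a) is incomplete.
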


\begin{proof}
(a)
By \eqref{ff}, $\widetilde{\mc F}_n(\ff)$ is empty if $n$ is odd.

Let $\ff$ be an algebraically closed field of characteristic not $2$.  By \eqref{ff1},  $\widetilde{\mc F}_{2n}(\pp)$ consists of any matrix that is similar to $J_{2n}(0)$. The matrix $-\Omega_nP_n$ is nilpotent since $-(-\Omega_nP_n)^2=J_n(0)^T\oplus J_n(0)$. The rank of $-\Omega_nP_n$ is $2n-1$. Hence
$-\Omega_nP_n$ is similar to $J_{2n}(0)$.

Let $\ff=\pp$ be a real closed field, and let $\overline\pp=\pp(i)$ with $i^2=-1$ be its algebraic closure. Let us prove that we can take
$\widetilde{\mc F}_{2n}(\pp)=\{-\Omega_nP_{n},
-\Omega_nQ_{n}(c)|\,c>0\}$.
Due to \eqref{ff2}, it suffices to show that $-\Omega_nQ_{n}(c)$ is similar over $\overline\pp$ to $J_n(ci)\oplus J_n(-ci)$. Indeed, it suffices to show that $\Omega_nQ_{n}(c)$ is similar to $J_n(ci)\oplus J_n(-ci)$.

\medskip

$\bullet$
Let $n$ be even. We have
\[
\Phi:=S^{-1}\Omega_nQ_n(c) S=\arr{{cccc|cccc}
\!\! C&I&&&&&&
   \\[-8pt]
\!\!&C&\ddots&&&
    \\[-5pt]
&&\ddots&I&&
    \\
&&&C&F
   \\   \hline
\vphantom{A^{A^A}}
&&& & C&I&&\\[-9pt]
&&&& &C&\ddots&
   \\[-9pt]
&&&& &&\ddots&I
   \\
&&&& &&&C
},\quad F:=\mat{0&-1\\1&0},
\]
in which
\[
S:=\matt{&&1\\[-2pt]
&\iddots\\1}\oplus\diag(1,-1,-1,1,1,-1,-1,1,\ldots)
\]
is the direct sum
of two $n\times n$ matrices.

Write  \[R:=\mat{1&1\\ci&-ci}.\]
Then
\[R^{-1}=\frac1{2ci}\mat{ci&1\\ci&-1},
\qquad
R^{-1}CR=\mat{ci&0\\0&-ci},\]
\[G:=R^{-1}FR=\mat{1+c^2&1-c^2\\-1+c^2&-1-c^2},
\]
\begin{align*}
T:=&(R\oplus\dots\oplus R)^{-1}(\Phi-ciI_{2n})
(R\oplus\dots\oplus R)
\\=&
\arr{{cccc|cccc}
\!\! D&I&&&&&&
   \\[-8pt]
\!\!&D&\ddots&&&
    \\[-5pt]
&&\ddots&I&&
    \\
&&&D&G
   \\   \hline
\vphantom{A^{A^A}}
&&& & D&I&&\\[-9pt]
&&&& &D&\ddots&
   \\[-9pt]
&&&& &&\ddots&I
   \\
&&&& &&&D
}\text{ with } D:=\mat{0&0\\0&-2ci}.
\end{align*}
The matrix obtained from $T$ by deleting its first column and its penultimate row is nonsingular since
it has a block-triangular form in which all diagonal $2\times 2$ blocks are nonsingular. Hence  $\operatorname{rank} T=2n-1$, and so  $\Phi$ is similar to $J_n(ci)\oplus J_n(-ci)$ over  $\overline\pp$.
\medskip

$\bullet$
Let us show that $\Omega_nQ_n(c)$ with odd $n$ is similar to $J_n(ci)\oplus J_n(-ci)$. Expanding the determinant $\chi_n(x):=\det(xI_{2n}-\Omega_nQ_n(c)) $ along the first row, we get
 $\chi_n(x)=(x^2+c^2)\chi_{n-1}(x)$. Hence $\chi_n(x)=(x^2+c^2)^n$. In order to show that the rank of $
ciI_{2n}-\Omega_nQ_n(c)$ is $2n-1$, we delete its $(n+1)$st column and its last row and let $R_n$ denote the matrix  obtained. For example,
\[
R_5=\arr{{ccccc|cccc}
ci&0&0&0&0&0&0&0&-c\\
-1&ci&0&0&0&0&0&c&0\\
0&-1&ci&0&0&0&-c&0&0\\
0&0&-1&ci&0&c&0&0&0\\
0&0&0&-1&ci&0&0&0&0\\ \hline
0&0&0&0&c&1&0&0&0\\
0&0&0&-c&0&ci&1&0&0\\
0&0&-c&0&0&0&ci&1&0\\
0&-c&0&0&0&0&0&ci&1}
\]
 Let us prove that $R_n$
is nonsingular. Denote by $R_{n-1}$ the matrix obtained from $R_n$ by deleting its first and last rows and its first and last  columns. Expanding the determinant of $R_n$ along the first row, we find that  $\det (R_n)=2ci\det (R_{n-1})$ if $n> 3$. Hence $\det (R_n)=(2ci)^{n-2}\det (R_2)=(2ci)^{n-1}\ne 0$.
\medskip

(b)
We have proved that $\Omega_nQ_n(c)$ is similar to $J_n(ci)\oplus J_n(-ci)$.
For each $\Psi\in\{-\Omega_nP_{n},
-\Omega_nQ_{n}(c)|\,c>0\}$, the matrix $\Omega_n\Psi\in\{P_{n},
Q_{n}(c)\}$ is symmetric. Hence \eqref{nhg} holds with $\widetilde{\Psi}=\Omega_n$.
\end{proof}

\begin{proof}[Proof of Theorem \ref{thm}] Statement (a) follows from
the following  proof of (b).

(b) Let $\pp$ be a real closed field. Let us construct ${\mc F}_{2n}(\pp)$ as follows:  $\widetilde{\mc F}_{2n}(\pp)=\{P_{n},Q_{n}(c)|\,c>0\}$  as in Lemma \ref{ngi}, and  ${\mc F}_{2n}(\pp) \smallsetminus\widetilde{\mc F}_{2n}(\pp)$ consists of those Jordan blocks $J_{2n}(a)$ with $a\in\pp$ and the realifications \eqref{tys} that do not satisfy \eqref{ff2}.

Let $A\in\pp^{2m\times 2m}$ be any symmetric matrix of even size. By Theorem \ref{thm1} and Lemma \ref{ngi}, $(A,\Omega_m)$ is congruent to a direct sum of pairs of the form
\[
\left( \matt{0&\Phi_n\\
\vphantom{A^{A^A}}\Phi_n^T&0},
\Omega_n\right),\quad
\pm(P_{n},\Omega_n),\quad
\pm(Q_{n}(c),\Omega_n),
\]
in which $c>0$ and $\Phi_n$ is defined in \eqref{mho}.

Each pair $-(B,\Omega_n)=(-B,-\Omega_n)$ with symmetric $B\in\pp^{2n\times 2n}$ is congruent to $(-B,\Omega_n)$. This follows from the congruence of $M:=-B-\Omega_{n}$ and $M^T=-B^T-\Omega_{n}^T=-B+\Omega_{n}$  since these sums are the unique expressions of $M$ and $M^T$ as sums of symmetric and skew-symmetric matrices; the matrices  $M$ and $M^T$ are congruent since each square matrix over $\pp$ is congruent to its transpose (see \cite{hor-ser_tran}).

Thus, $(A,\Omega_m)$ is congruent to a direct sum of pairs of the form $(B,\Omega_n)$ with $B$ from \eqref{thu}; the summands
are determined by $(A,\Omega_{m})$ uniquely up to permutations. For each $M\in\rr^{2p\times 2p}$ and $N\in\rr^{2q\times 2q}$, the matrix pair
$(M, \Omega_{p})\oplus (N, \Omega_{q})$
is permutation congruent to
$(M\boxplus N, \Omega_p\boxplus \Omega_q)=(M\boxplus N, \Omega_{p+q})$.
Hence $(A,\Omega_m)$ is congruent to some pair $(D,\Omega_m)$, in which $D$ is a block-direct sum of matrices of the form \eqref{thu}.
\medskip

(c) If we replace each $\Phi_n\in{\mc F}_{n}(\pp) \smallsetminus\widetilde{\mc F}_{n}(\pp)$ in  part (b) of this proof by $\Phi_n^T$, we find that the set of matrices \eqref{thu} in Theorem \ref{thm}
can be replaced by
\begin{equation}\label{thu1}
 \arr{{c|c}0&\Phi_n^T\\\hline
\Phi_n&0},\quad
\pm P_{n},\quad
\pm Q_{n}(c).
\end{equation}
If we multiply the
matrices \eqref{thu1}
on the left by $\Omega_n$, we obtain the matrices \eqref{thh}, which proves (c) due to \eqref{xxx}.
\end{proof}

\begin{remark} In \eqref{rrd} and \eqref{thu}, the canonical summand $P_n$ can be replaced by
\[
P'_n:=\arr{{cccc|cccc}&&& & 0&1&&
  \\[-9pt]
&&&& &0&\ddots&
   \\[-9pt]
&&&& &&\ddots&1
   \\
&&&& &&&0
   \\   \hline
\!\!0&&&&1&&&
   \\
\!\!1&0&&&&
    \\[-5pt]
&\ddots&\ddots&&&
    \\
&&1&0}\quad\text{(as in \cite{koc})},
\]
and the canonical summand $Q_n(c)$ with odd $n$ can be replaced  by
\begin{equation*}\label{m3}
Q'_{n}(c):=\arr{{ccccc|ccccc}
\!\!\matt{c^2}&&&& & \matt{0}&\matt{0&1}&&
   \\
&&&&& & C&I_2&&
   \\[-9pt]
&&&&&& &C&\ddots&
  \\[-9pt]
&&&&&& &&\ddots&I_2
  \\
&&&&&& &&&C
                 \\
                 \hline
\!\!\matt{0}&&&& & \matt{1}&\matt{1&0}&&
    \\
\!\!\matt{0\\1}&C^T&&&&\matt{1\\0}&&&
   \\
&I_2&C^T&&&&
    \\[-5pt]
&&\ddots&\ddots&&&
   \\
&&&I_2&C^T
}
\end{equation*}
with $C:=\matt{0&1\\-c^2&0}$.
In order to prove this, let us show that $P_{n}$ and $Q_{n}(c)$ in Lemma \ref{ngi} can be replaced by $P'_{n}$ and $Q'_{n}(c)$, in which $Q'_{n}(c):=Q_{n}(c)$ if $n$ is even.

The matrix $\Omega_nP_n'$ is similar to $J_{2n}(0)$ since it acts on the standard basis, up to multiplication by $-1$, as follows:
\[
e_{2n}\mapsto e_{2n-1}\mapsto\cdots\mapsto e_{n+1}\mapsto
e_{1}\mapsto e_{2}\mapsto\cdots\mapsto e_{n}\mapsto 0.
\]
The matrix $\Omega_nQ'_n(c)$ with odd $n$ is similar to $J_n(ci)^{\pp}$ since
\[
S^{-1}\Omega_nQ'_n(c) S=\mat{
C&I
   \\[-8pt]
&C&\ddots
    \\[-5pt]
&&\ddots&I
    \\
&&&C}=J_n(ci)^{\pp},
\]
in which
\[
S:=\matt{&&1\\[-2pt]
&\iddots\\1}\oplus\diag(1,1,-1,-1,1,1,-1,-1,\ldots)
\]
is the direct sum of two $n\times n$ matrices.
\end{remark}

\section*{Acknowledgements}
The work was supported in part by the UAEU UPAR grants G00001922 and G00002160.

\end{document}